\documentclass{amsart}
\usepackage{amsmath}
\usepackage{amssymb}

\numberwithin{equation}{section}

\newtheorem{theorem}{Theorem}[section]
\newtheorem{proposition}[theorem]{Proposition}
\newtheorem{corollary}[theorem]{Corollary}
\newtheorem{lemma}[theorem]{Lemma}

\theoremstyle{definition}
\newtheorem{definition}[theorem]{Definition}

\newcommand{\al}{\alpha}

\newcommand{\clk}{\sigma}

\newcommand{\neva}{N}

\newcommand{\aream}{A}
\newcommand{\meal}{m_n}

\newcommand{\diin}{\Theta}

\newcommand{\er}{\varepsilon}
\newcommand{\za}{\zeta}

\newcommand{\ph}{\varphi}
\newcommand{\cph}{C_\varphi}
\newcommand{\hol}{\mathcal{H}ol}
\newcommand{\Dbb}{\mathbb D}
\newcommand{\Tbb}{\mathbb T}
\newcommand{\Rl}{\mathrm{Re}}

\newcommand{\tn}{\mathbb{T}^n}

\newcommand{\dn}{\mathbb{D}^n}

\newcommand{\knl}{k}
\newcommand{\kknl}{\mathbf{k}}
\newcommand{\Cbb}{\mathbb C}

\newcommand{\spr}{\Sigma}
\begin{document}

\title[Composition operators]{Composition operators between model and Hardy spaces}
\author{Evgueni Doubtsov}
\address{St.~Petersburg Department
of Steklov Mathematical Institute, Fontanka 27, St.~Petersburg 191023, Russia}
\email{dubtsov@pdmi.ras.ru}


\begin{abstract}
Let $n\ge 1$ and $\varphi: \mathbb{D}^n\to\mathbb{D}$ be a holomorphic function,
where $\mathbb{D}$ denotes the open unit disk of $\mathbb{C}$.
Let $\Theta: \mathbb{D} \to \mathbb{D}$ be an inner function and
$K^p_\Theta$, $p>0$, denote the corresponding model space.
We obtain characterizations of the compact composition operators
$C_\varphi: K^p_\Theta \to H^p(\mathbb{D}^n)$, $1<p<\infty$,
where $H^p(\mathbb{D}^n)$ denotes the Hardy space.
\end{abstract}

\maketitle

\section{Introduction}

Let $\Dbb = \{z\in \Cbb: |z|<1\}$ and $\Tbb = \partial \Dbb$.
Let $\meal$ denote the normalized Lebesgue measure on the torus $\tn$, $n\ge 1$.
Let $\hol(\dn)$ denote the space of holomorphic functions in the polydisk $\dn$, $n\ge 1$.
For $0<p<\infty$, the classical Hardy space $H^p=H^p(\dn)$ consists of
$f\in \hol(\dn)$ such that
\[
\|f\|_{H^p}^p = \sup_{0<r<1} \int_{\tn} |f(r\za)|^p\, d\meal(\za) < \infty.
\]
As usual, the Hardy space $H^p(\dn)$, $p>0$, is identified with the space
$H^p(\tn)$ of the corresponding boundary values.

\subsection{Composition operators}
Consider a holomorphic mapping $\ph: \dn\to\Dbb$, $n\ge 1$.
It is known that the composition operator $\cph: f\mapsto f\circ\ph$ maps $H^p(\Dbb)$
into $H^p(\dn)$, $p>0$. Indeed, assume that $f\in H^p(\Dbb)$.
The following argument is well known.
Let $h$ denote the Poisson integral of the boundary values $|f^\ast|^p$.
Then $|f\circ \ph|^p \le h\circ\ph$ and $h\circ\ph$ is a pluriharmonic function in $\dn$.
Hence, $f\circ\ph \in H^p(\dn)$,
as required.

Since $\cph$ maps $H^p(\Dbb)$ into $H^p(\dn)$, it is natural to ask about characterizations
of those symbols $\ph$ for which
$\cph: H^p(\Dbb)\to H^p(\dn)$ is a compact operator.
For $n=1$, an explicit approach based on the Nevanlinna counting function
was developed in \cite{ShJoel87}.
A different answer, in terms of the Clark measures, was obtained in  \cite{CM97}.

\subsection{Model spaces}
\begin{definition}\label{d_inner}
A holomorphic function
 $\diin:\Dbb \to \Dbb$ is called \textsl{inner} if
$|\diin(\za)|=1$ for $m_1$-a.e.\ $\za\in\Tbb$.
\end{definition}

As usual, in the above definition, $\diin(\za)$ denotes
$\lim_{r\to 1-} \diin(r\za)$.
It is known that the corresponding limit exists $m_1$-a.e.

For an inner function $\diin$ on $\Dbb$, the classical model space $K_\diin$ is defined as
\[
K_\diin = H^2(\Dbb)\ominus \diin H^2(\Dbb).
\]

\subsection{Composition operators on model spaces}
In the present work,
in terms of the Nevanlinna counting function and in terms of the Clark measures,
we characterize those symbols $\ph$ for which
$\cph: K_\diin \to H^2(\dn)$ is a compact operator.
For $n=1$, such characterizations were obtained in \cite{LM13}.
Also, in Theorem~\ref{t_comp_KTp1}, the analogous problem is solved for the operator
$\cph: K_\diin^p \to H^p(\Dbb^n)$, $p>1$, $n\ge 1$,
where $K_\diin^p := H^p \cap \diin \overline{H^p}$.
Observe that $K_\diin^2 = K_\diin$.

\subsection*{Organization of the paper}
Auxiliary results are collected in Section~\ref{s_aux}.
Compact composition operators $\cph: K_\diin \to H^2(\dn)$ are characterized in Section~\ref{s_KT}
with the help of the Nevanlinna counting function.
Real interpolation of Banach spaces is applied in Section~\ref{s_KT}
to prove that the compactness of the operator $\cph: K_\diin^p \to H^p(\dn)$, $n\ge 1$,
does not depend on the parameter $p$ for $1<p<\infty$.
For a one-component inner function $\diin$, a description of the compact composition operators $\cph: K_\diin \to H^2(\dn)$
in terms of the Clark measures is given in Section~\ref{s_clk}.

\section{Auxiliary results}\label{s_aux}
\subsection{Littlewood--Paley identity and its generalizations}\label{ss_HL}

For $f\in H^2(\Dbb)$, the Little\-wood--Paley identity states that
\begin{equation}\label{e_LP}
\|f\|^2_{H^2(\Dbb)} = |f(0)|^2 + 2\int_{\Dbb} |f^\prime(w)|^2 \log\frac{1}{|w|}\, d\aream(w),
\end{equation}
where $\aream$ denotes the area measure on the disk $\Dbb$.

\subsubsection*{Stanton's formula}
To study the composition operator generated by a holomorphic symbol $\phi:\Dbb \to \Dbb$,
J.~H.~Shapiro \cite{ShJoel87} used for $f\circ\phi$ an analog of the identity \eqref{e_LP}.
This analog is based on the Nevanlinna counting function $\neva_\phi$ defined by
\[
\neva_{\phi}(w) = \sum_{z\in \Dbb:\, \phi(z)=w} \log\frac{1}{|z|}, \quad w\in\Dbb\setminus\{\phi(0)\},
\]
where each pre-image is counted according to its multiplicity.
The following Stanton formula is the principal technical tool in Shapiro's argument.

\begin{theorem}[\cite{ShJoel87}]\label{t_Stn_disk}
Let $\phi: \Dbb\to\Dbb$ be a holomorphic function.
Then
\begin{equation}\label{e_Stntn}
\|f\circ \phi\|^2_{H^2(\Dbb)} = |f(\phi(0))|^2 + 2\int_{\Dbb} |f^\prime(w)|^2
\neva_{\phi}(w) \, d\aream(w).
\end{equation}
\end{theorem}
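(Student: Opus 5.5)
The plan is to apply the Littlewood--Paley identity \eqref{e_LP} to the function $g = f\circ\phi$ and then change variables $w=\phi(z)$ in the area integral. The bookkeeping is done by the Nevanlinna counting function $\neva_\phi$.

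\textbf{Step 1.} Recall that $g=f\circ\phi\in H^2(\Dbb)$, as explained in the introduction. By \eqref{e_LP},
\[
\|f\circ\phi\|^2_{H^2(\Dbb)} = |f(\phi(0))|^2 + 2\int_{\Dbb} |f'(\phi(z))|^2 |\phi'(z)|^2 \log\frac{1}{|z|}\, d\aream(z).
\]
This uses $g(0)=f(\phi(0))$ and the chain rule $g'=(f'\circ\phi)\,\phi'$.

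\textbf{Step 2.} We may assume $\phi$ is nonconstant; otherwise both sides equal $|f(\phi(0))|^2$, since $\neva_\phi\equiv 0$ off a point. Apply the non-univalent change of variables (area formula) for holomorphic maps. For a nonnegative measurable $u$ on $\Dbb$ and nonnegative measurable $h$ on $\Dbb$,
\[
\int_{\Dbb} u(\phi(z))\, h(z)\, |\phi'(z)|^2\, d\aream(z) = \int_{\Dbb} u(w) \sum_{z:\,\phi(z)=w} h(z)\, d\aream(w).
\]
Here $|\phi'|^2$ is the real Jacobian of $\phi$, and preimages are counted with multiplicity.

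To justify this, decompose $\Dbb\setminus\{\phi'=0\}$, up to the countable set of critical points (a null set), into countably many disjoint Borel pieces on which $\phi$ is univalent. On each piece use the ordinary change of variables, then sum by monotone convergence. At the countably many critical values, multiplicities only affect a null set of $w$. Taking $u=|f'|^2$ and $h(z)=\log(1/|z|)$, the inner sum is exactly $\neva_\phi(w)$ for $w\ne\phi(0)$, and this gives \eqref{e_Stntn}.

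\textbf{Main obstacle.} The only delicate point is Step 2: the rigorous countable-univalent decomposition and the handling of multiplicities and critical values. Everything involved is nonnegative, so Tonelli/monotone convergence removes all integrability issues. Both sides may be compared in $[0,\infty]$, and finiteness of the left side gives finiteness of the right.

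An alternative, if one prefers to avoid the area formula, is to work on $r\Dbb$ with Green's formula applied to $|g|^2$, using $\Delta|g|^2=4|g'|^2$. One then counts preimages in $r\Dbb$ and lets $r\to1$. I would use the area-formula route.
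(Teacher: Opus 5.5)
Your proposal is correct and follows essentially the standard argument from \cite{ShJoel87}, which the paper cites without giving a proof: you apply the Littlewood--Paley identity \eqref{e_LP} to $f\circ\phi$ and then make the non-univalent change of variables $w=\phi(z)$, so that summing $\log(1/|z|)$ over the preimages of $w$ produces $\neva_\phi(w)$. Your treatment of the critical points and critical values as a countable null set, and your use of Tonelli for the nonnegative integrands, supply exactly the justification this change of variables needs.
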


For a function $f\in \hol(\dn)$ and a point $\za\in\tn$, the slice-function
$f_\za\in\hol(\Dbb)$ is defined by the equality $f_\za(\lambda) = f(\lambda\za)$, $\lambda\in\Dbb$.

\begin{corollary}\label{c_Stanton}
Let $\ph: \dn\to\Dbb$, $n\ge 1$, be  a holomorphic function.
Then
\begin{equation}\label{e_Stntn_d}
\|f\circ \ph\|^2_{H^2(\dn)} = |f(\ph(0))|^2 + 2\int_{\Dbb} |f^\prime(w)|^2
\left(\int_{\tn} \neva_{\ph_\za}(w)\,  d\meal(\za) \right)\, d\aream(w).
\end{equation}
\end{corollary}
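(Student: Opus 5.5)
The plan is to reduce the polydisk identity to the one-variable Stanton formula (Theorem~\ref{t_Stn_disk}) by slicing. For $\za\in\tn$ the slice $\ph_\za(\lambda)=\ph(\lambda\za)$ is a holomorphic self-map of $\Dbb$, and $(f\circ\ph)_\za = f\circ\ph_\za$.

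The first step is to express the $H^2(\dn)$ norm through slices. For $F\in H^2(\dn)$ we write $F=\sum_\al c_\al z^\al$. Then
\[
\|F\|^2_{H^2(\dn)} = \sum_\al |c_\al|^2 = \int_{\tn} \|F_\za\|^2_{H^2(\Dbb)}\, d\meal(\za).
\]
To see this, note that $F_\za(\lambda)=\sum_k \bigl(\sum_{|\al|=k} c_\al \za^\al\bigr)\lambda^k$. Hence $\|F_\za\|^2_{H^2(\Dbb)}=\sum_k |\sum_{|\al|=k}c_\al\za^\al|^2$. Integrating over $\tn$ and using orthogonality of the monomials $\za^\al$ gives $\sum_\al|c_\al|^2$. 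The same computation works for an arbitrary $F\in\hol(\dn)$, with both sides possibly infinite, by monotone convergence. This matters because we do not assume a priori that $f\circ\ph\in H^2$ for general $f\in\hol(\Dbb)$; if one wants $f\in H^2(\Dbb)$, this is already known from the introduction.

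The second step applies Theorem~\ref{t_Stn_disk} to $\phi=\ph_\za$ for each fixed $\za$. Since $\ph_\za(0)=\ph(0)$, this gives
\[
\|f\circ\ph_\za\|^2_{H^2(\Dbb)} = |f(\ph(0))|^2 + 2\int_{\Dbb}|f'(w)|^2\neva_{\ph_\za}(w)\,dA(w).
\]
We integrate in $\za$ over $\tn$ with respect to $\meal$, a probability measure, and use the first step on the left. Tonelli's theorem, applied to nonnegative integrands, lets us swap the $\za$- and $w$-integrals, and \eqref{e_Stntn_d} follows.

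The main obstacle is justifying Tonelli, which requires joint measurability of $(\za,w)\mapsto \neva_{\ph_\za}(w)$. I would handle it by writing $\neva_{\ph_\za}(w)=\lim_{r\to1}\neva_{\ph_\za,r}(w)$ as an increasing limit. Via Jensen's formula, each truncated counting function equals
\[
\int_{\Tbb}\log|\ph(r e^{it}\za)-w|\,\frac{dt}{2\pi}-\log|\ph(0)-w| \quad\text{(plus }\log r\text{ terms)}.
\]
This is jointly Borel measurable in $(\za,w)$, so the limit is measurable as well. Alternatively, one can first apply Theorem~\ref{t_Stn_disk} to $\ph_\za(r\cdot)$ and pass to the limit $r\to1$ by monotone convergence.

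Two degenerate cases need only a remark. The exceptional point $w=\ph(0)$ has area measure zero. Slices where $\ph_\za$ is constant contribute $\neva_{\ph_\za}\equiv 0$, and the formula still holds trivially for them.
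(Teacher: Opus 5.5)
Your proposal is correct and is the paper's argument: apply Theorem~\ref{t_Stn_disk} to each slice $\ph_\za$, use $\ph_\za(0)=\ph(0)$, and integrate over $\tn$ against $\meal$. You also supply two details the paper leaves implicit: the slice identity $\|F\|^2_{H^2(\dn)}=\int_{\tn}\|F_\za\|^2_{H^2(\Dbb)}\,d\meal(\za)$, and the joint measurability of $(\za,w)\mapsto\neva_{\ph_\za}(w)$ that Tonelli's theorem needs.
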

\begin{proof}
Let $\za\in \tn$.
Applying Theorem~\ref{t_Stn_disk} with $\phi = \ph_\za$ and integrating with respect to  the normalized measure
$\meal$ on $\tn$, we obtain the required equality \eqref{e_Stntn_d}.
\end{proof}

\subsection{Subharmonic property for the Nevanlinna counting function}

\begin{proposition}[{\cite[Section 4.6]{ShJoel87}}]\label{p_subharm}
Let $w\in\Dbb$ and $\phi: \Dbb\to\Dbb$ be a holomorphic function.
Let $\Delta$ be a disk centered at $w$ and such that $\phi(0)\notin \Delta$.
Then
\begin{equation}\label{e_subharm_1}
\neva_{\phi}(w) \le \frac{1}{\aream(\Delta)}\int_{\Delta} \neva_\phi(z)\, d\aream(z).
\end{equation}
\end{proposition}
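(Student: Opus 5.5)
Our plan is to prove the sub-mean-value inequality \eqref{e_subharm_1} by showing that $\neva_\phi$ is subharmonic in $\Dbb\setminus\{\phi(0)\}$. Since $\Delta$ is a disk centered at $w$ with $\phi(0)\notin\Delta$, and we may assume $\overline{\Delta}\subset \Dbb$, the inequality then follows from the area sub-mean-value property of subharmonic functions. If $\Delta$ touches $\partial\Dbb$, we apply the argument to slightly smaller concentric disks and let the radius increase; the integrals converge by monotone convergence because $\neva_\phi\ge 0$.

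First we approximate $\neva_\phi$ by truncated counting functions. For $0<r<1$ put $\phi_r(z)=\phi(rz)$, and write
\[
\neva_{\phi,r}(w)=\sum_{|z|<r,\ \phi(z)=w}\log\frac{r}{|z|}.
\]
Then $\neva_{\phi,r}=\neva_{\phi_r}$, where $\phi_r$ is holomorphic on a neighbourhood of $\overline{\Dbb}$. As $r\to1^-$, $\neva_{\phi,r}\nearrow\neva_\phi$ pointwise, because each summand increases in $r$ and the index set grows. An increasing limit of subharmonic functions satisfies the sub-mean-value inequality with the same constants (again by monotone convergence), so it suffices to treat $\neva_{\phi_r}$.

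Second, we show that $\neva_{\phi_r}$ is subharmonic off $\phi(0)$. Jensen's formula applied to $\phi_r - w$ gives
\[
\neva_{\phi_r}(w)=\int_{\Tbb}\log|\phi(r\za)-w|\,dm_1(\za)-\log|\phi(0)-w|
\]
for $w\neq\phi(0)$, provided $\phi_r-w$ does not vanish identically; if $\phi$ is constant the statement is trivial. The first term is an average over $\za$ of the subharmonic functions $w\mapsto\log|\phi(r\za)-w|$, hence subharmonic in $w$; it is also upper semicontinuous, being bounded above and an integral of usc functions, by Fatou's lemma. The second term, $-\log|\phi(0)-w|$, is harmonic on $\Cbb\setminus\{\phi(0)\}$. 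Therefore $\neva_{\phi_r}$ is subharmonic on $\Dbb\setminus\{\phi(0)\}$.

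Third, we apply the sub-mean-value property on circles centered at $w$ inside $\Delta$ and integrate over the radius to get the area version on $\Delta$. Passing to the limit $r\to1$ yields \eqref{e_subharm_1}.

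The main technical point is the justification in the second step: upper semicontinuity and integrability of $\int\log|\phi(r\za)-w|\,dm_1$. This is where taking $r<1$ helps, since it keeps $\phi_r$ bounded and the relevant integrals finite, so the Jensen identity holds exactly and no boundary-value issues arise.
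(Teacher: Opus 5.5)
Your proof is correct and is essentially the standard argument from Shapiro's paper, which the statement cites (the paper itself gives no proof): Jensen's formula writes the truncated counting function as $\neva_{\phi_r}(w)=\int_{\Tbb}\log|\phi(r\za)-w|\,dm_1(\za)-\log|\phi(0)-w|$, which is subharmonic on $\Dbb\setminus\{\phi(0)\}$, and the sub-mean-value inequality then passes to the increasing limit $\neva_{\phi_r}\nearrow\neva_\phi$. Only your opening sentence overstates things: $\neva_\phi$ itself need not be subharmonic (for $\phi(z)=\exp\bigl(-\frac{1+z}{1-z}\bigr)$ one has $\neva_\phi(0)=0$ but $\neva_\phi(w)\to 1$ as $w\to 0$, so upper semicontinuity fails), but your argument only uses the sub-mean-value inequality for the limit, which is exactly what the statement requires.
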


\begin{corollary}\label{c_subharm}
Let $w\in\Dbb$ and $\ph: \dn\to\Dbb$ be a holomorphic function.
Let $\Delta$ be a disk centered at $w$ and such that $\ph(0)\notin \Delta$.
Then
\begin{equation}\label{e_subharm}
\int_{\tn} \neva_{\ph_\za}(w)\,d\meal(\za) \le \frac{1}{\aream(\Delta)}\int_{\Delta}
\left(\int_{\tn} \neva_{\ph_\za}(z)\,  d\meal(\za) \right)\, d\aream(z).
\end{equation}
\end{corollary}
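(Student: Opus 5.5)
The plan is to apply Proposition~\ref{p_subharm} slice by slice and then average over the torus, exactly as Corollary~\ref{c_Stanton} was obtained from Theorem~\ref{t_Stn_disk}. Fix $\za\in\tn$ and consider the slice function $\phi=\ph_\za:\Dbb\to\Dbb$, $\phi(\lambda)=\ph(\lambda\za)$. It is holomorphic, and $\phi(0)=\ph(0)$. So the hypothesis $\ph(0)\notin\Delta$ gives $\phi(0)\notin\Delta$ for every $\za\in\tn$. Proposition~\ref{p_subharm} then yields, for every $\za\in\tn$,
\[
\neva_{\ph_\za}(w)\le \frac{1}{\aream(\Delta)}\int_{\Delta}\neva_{\ph_\za}(z)\,d\aream(z).
\]

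Next we integrate this inequality with respect to the normalized measure $\meal$ over $\za\in\tn$. On the right we interchange the order of integration by Tonelli's theorem; this is legitimate because $\neva_{\ph_\za}(z)\ge 0$. The result is exactly \eqref{e_subharm}.

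The only point needing care is joint measurability of $(\za,z)\mapsto\neva_{\ph_\za}(z)$ on $\tn\times\Delta$, which Tonelli's theorem requires. I would handle it through Jensen's formula. Away from the single point $\ph(0)$, one has
\[
\neva_{\ph_\za}(z)=\lim_{r\to1-}\left(\int_{\Tbb}\log\left|\frac{\ph(r\lambda\za)-z}{1-\overline{z}\ph(r\lambda\za)}\right|dm_1(\lambda)-\log\left|\frac{\ph(0)-z}{1-\overline{z}\ph(0)}\right|\right).
\]
For each fixed $r$, the expression inside the limit is a Borel function of $(\za,z)$. Hence the limit is Borel on $\tn\times(\Dbb\setminus\{\ph(0)\})$.

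Alternatively, one may simply note that the left-hand inner integrals in \eqref{e_Stntn_d} are already treated as measurable, and the same justification applies here. I do not expect any genuine obstacle beyond this measurability remark.
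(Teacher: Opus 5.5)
Your proposal is correct and follows the paper's own proof: apply Shapiro's subharmonicity estimate to each slice $\ph_\za$ (using $\ph_\za(0)=\ph(0)\notin\Delta$), integrate over $\tn$, and interchange the integrals by Fubini--Tonelli. Your Jensen-formula argument for the joint Borel measurability of $(\za,z)\mapsto \neva_{\ph_\za}(z)$ is valid and supplies a detail the paper leaves implicit.
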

\begin{proof}
Let $\za\in \tn$.
Using Proposition~\ref{p_subharm} with $\phi = \ph_\za$,
integrating with respect to $\meal$ and applying Fubini's theorem, we obtain the required
inequality \eqref{e_subharm}.
\end{proof}

\subsection{Reproducing kernels for $K_\diin$}

Recall that the reproducing kernel $\knl_w(z)$ for $K_\diin$ is defined by the following equality:
\[
\knl_w(z) = \frac{1-\diin(z)\overline{\diin}(w)}{1- z\overline{w}}, \quad
\|\knl_w\|^2 = \frac{1-|\diin(w)|^2}{1-|w|^2}.
\]

\begin{lemma}[{\cite[Lemma~1]{LM13}}]\label{l_LM}
Let $\{w_q\}_{q=1}^\infty \subset\Dbb$ be a sequence such that $|w_q|\to 1$ as $q\to \infty$ and
\begin{equation}\label{e_LM}
|\diin (w_q)| < a
\end{equation}
for some parameter $a\in (0,1)$. Then
$\knl_{w_q}/\|\knl_{w_q}\| \overset{w^\ast}\longrightarrow 0$ as $q\to \infty$.
\end{lemma}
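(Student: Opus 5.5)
Since $K_\diin$ is a Hilbert space, weak-$\ast$ and weak convergence coincide, and the unit ball of $K_\diin$ is bounded. The plan is therefore to show two things for $g_q := \knl_{w_q}/\|\knl_{w_q}\|$: first, that $\langle f, g_q\rangle \to 0$ for every $f$ in a dense subset of $K_\diin$; second, that $\|g_q\|=1$ for all $q$. A standard $\er/3$ argument then gives weak convergence to zero.

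For the first part I compute directly with the reproducing property. For $f\in K_\diin$ we have
\[
\langle f, g_q\rangle = \frac{f(w_q)}{\|\knl_{w_q}\|} = f(w_q)\,\frac{(1-|w_q|^2)^{1/2}}{(1-|\diin(w_q)|^2)^{1/2}}.
\]
By hypothesis \eqref{e_LM}, the denominator satisfies $1-|\diin(w_q)|^2 \ge 1-a^2>0$. It therefore suffices to show that $f(w_q)(1-|w_q|^2)^{1/2}\to 0$ as $|w_q|\to1$.

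This holds for every $f\in H^2(\Dbb)$. The general estimate $|f(w)|\le \|f\|_{H^2}(1-|w|^2)^{-1/2}$ only gives boundedness, so I would add an approximation step. Approximate $f$ in $H^2$ by a polynomial $P$ with $\|f-P\|<\er$. Then
\[
|f(w)|(1-|w|^2)^{1/2}\le \er + |P(w)|(1-|w|^2)^{1/2},
\]
and the last term tends to $0$ as $|w|\to 1$ because $P$ is bounded. Since $K_\diin\subset H^2(\Dbb)$, this applies to every $f\in K_\diin$, so the first part in fact holds on all of $K_\diin$ and no dense subset is needed.

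Combining this with the normalization $\|g_q\|=1$ gives $g_q\to0$ weakly, which is weak-$\ast$ convergence in the Hilbert space $K_\diin$. The only place the hypothesis \eqref{e_LM} enters is to keep $\|\knl_{w_q}\|$ comparable to $(1-|w_q|^2)^{-1/2}$. I do not expect a real obstacle. The one point needing care is the $H^2$ little-$o$ estimate near the boundary, and the polynomial approximation handles it.
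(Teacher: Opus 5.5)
Your proof is correct: the reproducing property gives $|\langle f,\knl_{w_q}\rangle|/\|\knl_{w_q}\| \le (1-a^2)^{-1/2}\,|f(w_q)|\,(1-|w_q|^2)^{1/2}$, and your polynomial approximation shows this tends to $0$ for every $f\in H^2(\Dbb)\supset K_\diin$, so the normalization and the $\varepsilon/3$ density step you mention are not needed. The paper gives no proof of its own and simply cites \cite[Lemma~1]{LM13}; your argument is the standard proof of that result.
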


\section{Compact composition operators on model spaces}\label{s_KT}

\subsection{Compact composition operators on $K_\diin$}

\begin{theorem}\label{t_cph_tri}
Let $\ph: \dn\to\Dbb$, $n\ge 1$, be a holomorphic function and $\diin: \Dbb \to \Dbb$ be an inner function
such that $K_\diin$ is infinite dimensional.
Then the following properties are equivalent.
\begin{itemize}
  \item [(i)] One has
  \begin{equation}\label{e_nevaTo0}
\int_{\tn}\neva_{\ph_\za}(w) \frac{1-|\diin(w)|}{1-|w|}\, d\meal(\za)\to 0 \quad \textrm{as}\ |w|\to 1-.
\end{equation}
  \item [(ii)] The operator $\cph: K_\diin \to H^2(\dn)$ is compact.
\end{itemize}
\end{theorem}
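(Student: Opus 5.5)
The plan follows Lyubarskii--Malinnikova for $n=1$, with Corollary~\ref{c_Stanton} replacing Stanton's formula and Corollary~\ref{c_subharm} replacing the subharmonicity of $\neva_\phi$. Write
\[
\mathcal N(w)=\int_{\tn}\neva_{\ph_\za}(w)\,d\meal(\za).
\]

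\emph{(ii)$\Rightarrow$(i).} First take $w$ with $|\diin(w)|\le a$ for a fixed $a\in(0,1)$ and put $K_w=\knl_w/\|\knl_w\|$. By Lemma~\ref{l_LM}, $K_w\to0$ weak-$\ast$ as $|w|\to1$ along such points, so compactness gives $\|\cph K_w\|_{H^2(\dn)}\to0$.

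To bound $\|\cph K_w\|$ from below, apply Corollary~\ref{c_Stanton} to $f=K_w$ and drop the term $|f(\ph(0))|^2$. Then use Corollary~\ref{c_subharm} on a disk $\Delta$ centered at $w$ of radius $c(1-|w|)$. On $\Delta$ one has $|\knl_w'(z)|\gtrsim |\knl_w(w)|/(1-|w|)$ once $c$ is small, with constants depending on $a$. This requires an estimate of $\diin'$ on $\Delta$; Schwarz--Pick gives $|\diin'|\le (1-|\diin|^2)/(1-|z|^2)$, and I expect this to suffice. Combining,
\[
\|\cph K_w\|^2\gtrsim \frac{1}{\|\knl_w\|^2}\,\frac{(1-|\diin(w)|^2)^2}{(1-|w|^2)^4}\,(1-|w|)^2\,\mathcal N(w)\asymp \mathcal N(w)\,\frac{1-|\diin(w)|}{1-|w|}.
\]

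For the points where $|\diin(w)|>a$, the same subharmonicity step compares $\mathcal N(w)$ with averages of $\mathcal N$ over a disk of radius about $1-|w|$. That disk contains points where $|\diin|$ is smaller, or else the factor $1-|\diin(w)|$ is itself small. Making this rigorous is the delicate point. A more natural alternative is to test with the functions $(1-\overline{\diin(w)}\diin)\,\knl_w$-type normalized kernels, or with $S^\ast$-images, as LM13 does; I would follow LM13's argument verbatim, since only the weighted counting function changes.

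\emph{(i)$\Rightarrow$(ii).} Let $f_j\to0$ weakly in $K_\diin$ with $\|f_j\|\le1$; then $f_j\to0$ locally uniformly. Split the integral in Corollary~\ref{c_Stanton} over $|w|\le r$ and $|w|>r$. The first part tends to $0$ for each fixed $r$. On $|w|>r$, (i) gives
\[
\mathcal N(w)\le \er(r)\,\frac{1-|w|}{1-|\diin(w)|},
\]
so it suffices to prove the Carleson-type embedding
\[
\int_{\Dbb}|f'(w)|^2\,\frac{(1-|w|)^2}{1-|\diin(w)|}\,\frac{1}{1-|w|}\,dA(w)\lesssim\|f\|^2_{H^2}, \qquad f\in K_\diin.
\]
This is a known estimate for model spaces: by Cohn's inequality, $\int|f'|^2\frac{1-|w|}{1-|\diin(w)|}\,dA\lesssim\|f\|^2$ for $f\in K_\diin$. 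That gives the uniform smallness, and hence compactness.

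The main obstacle is the necessity direction in the region where $|\diin(w)|$ is close to $1$, where Lemma~\ref{l_LM} is unavailable. There I would try to show that there the weight makes the quantity small automatically, using subharmonicity (Corollary~\ref{c_subharm}) together with the available bound $\mathcal N(w)\lesssim \log(1/|w|)$.
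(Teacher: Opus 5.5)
The genuine gap is in (i)$\Rightarrow$(ii). You quote as Cohn's inequality the embedding $\int_\Dbb|f'|^2\frac{1-|w|}{1-|\diin(w)|}\,d\aream\lesssim\|f\|^2_{H^2}$ for $f\in K_\diin$. This estimate is false for general inner $\diin$.

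For a counterexample, let $\diin$ be the Blaschke product with zeros $b_k=1-\delta_k$, where $\delta_{k+1}\le\delta_k^2$. Take $f=\kknl_{b_k}$; it lies in $K_\diin$ because it is the reproducing kernel at a zero of $\diin$. Consider the sector
\[
\Omega_k=\{z:\ \delta_k\le|1-z|\le(\delta_k\delta_{k-1})^{1/2},\ 1-|z|\ge|1-z|/2\}.
\]
On $\Omega_k$, the bound $1-|\diin|^2\le\sum_j(1-|B_j|^2)$ over the Blaschke factors $B_j$ gives $1-|\diin(z)|\lesssim\delta_k/|1-z|$. At the same time $|f'(z)|^2\asymp|1-z|^{-4}$ and $\|f\|^2\asymp\delta_k^{-1}$. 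Hence
\[
\frac{1}{\|f\|^2}\int_{\Omega_k}|f'|^2\,\frac{1-|z|}{1-|\diin(z)|}\,d\aream(z)\gtrsim\int_{\Omega_k}\frac{d\aream(z)}{|1-z|^2}\asymp\log\frac{\delta_{k-1}}{\delta_k}\to\infty .
\]
Already for a single factor $\diin(z)=\frac{b-z}{1-\overline{b}z}$ the constant is $\gtrsim\log\frac{1}{1-|b|}$. So your sufficiency proof rests on an estimate that does not hold: the weight $\frac{1-|w|}{1-|\diin(w)|}$ is far too large where $|\diin|$ is close to $1$.

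What you are missing in both directions is Littlewood's inequality. Since $\ph_\za(0)=\ph(0)$ for every $\za$, your $\mathcal N$ satisfies $\mathcal N(w)\le C(1-|w|)$ for $|w|>\frac{1+|\ph(0)|}{2}$.

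\emph{Necessity.} This bound disposes of what you call the main obstacle. The quantity in \eqref{e_nevaTo0} is at most $C(1-|\diin(w)|)$, so it is automatically small where $|\diin(w)|$ is close to $1$; this is exactly the step used in the proof of Theorem~\ref{T_2}. On $\{|\diin|\le a\}$ your kernel argument is correct. Schwarz--Pick does suffice there: $(1-|w|^2)^2|\knl_w'(w)|\ge(|w|-|\diin(w)|)(1-|\diin(w)|^2)$, so there is no cancellation once $|w|>(1+a)/2$. Cauchy estimates then extend the lower bound to a small pseudohyperbolic disk. No LM13-type alternative test functions are needed.

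\emph{Sufficiency.} The same bound shows that under (i) only the capped weight $(1-|w|)\min\{C,\er(r)/(1-|\diin(w)|)\}$ matters. Split at $1-|\diin(w)|=\sqrt{\er(r)}$ and use plain Littlewood--Paley on $\{|\diin|\le 1-\sqrt{\er(r)}\}$. Your argument then goes through provided
\[
\sup\Big\{\int_{\{|\diin|>1-\eta\}}|f'(w)|^2(1-|w|)\,d\aream(w):\ f\in K_\diin,\ \|f\|\le 1\Big\}\longrightarrow 0\quad(\eta\to 0).
\]
This uniform statement is what a proof of (i)$\Rightarrow$(ii) along your lines has to supply. It cannot be obtained by citing the uncapped embedding.
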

\begin{proof}[About the proof]
The principal argument is adaptable from \cite{ShJoel87}.
In fact, the proof essentially coincides with that for $\ph$ defined on the unit ball of $\mathbb{C}^n$;
see \cite{Dou25}.
\end{proof}

\subsection{Compact composition operators on $K_\diin^p$, $p>1$}\label{s_KTp}

For $0<p<\infty$ and an inner function $\diin$, put
\[
K^p_\diin = K^p_\diin(\Dbb) \overset{\mathrm{def}}= H^p(\Dbb) \cap \diin \overline{H^p}(\Dbb).
\]
It is well known and easy to see that $K^2_\diin = K_\diin$.

By definition, an inner function $\diin: \Dbb \to \Dbb$ is called \textit{one-component} if the set
$\{z\in\Dbb: |\diin(z)|< r\}$ is connected for some parameter $r\in (0,1)$.
This section is motivated by the following assertion.

\begin{proposition}[{\cite[Section~4]{LM13}}]\label{p_BaLM}
Let $\phi: \Dbb\to\Dbb$ be a holomorphic function, $p>1$ and $\diin$ be a one-component inner function.
Then the operator $C_\phi: K^p_\diin \to H^p(\Dbb)$ is compact if and only if
\[
\neva_{\phi}(w) \frac{1-|\diin(w)|}{1-|w|} \to 0 \quad \textrm{as}\ |w|\to 1-.
\]
\end{proposition}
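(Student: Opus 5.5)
The statement is quoted from \cite[Section~4]{LM13}, so the plan is to reduce it to the case $p=2$, which is Theorem~\ref{t_cph_tri} with $n=1$, via interpolation. The reduction rests on two facts.

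\textbf{Step 1 ($p=2$).} For $p=2$, Theorem~\ref{t_cph_tri} with $n=1$ gives exactly the stated criterion, since $\meal$ then disappears. Hence it suffices to show that, for a one-component $\diin$, compactness of $C_\phi: K^p_\diin\to H^p(\Dbb)$ does not depend on $p\in(1,\infty)$.

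\textbf{Step 2 (interpolation scale).} For one-component $\diin$, the Riesz projection restricted to $\diin\overline{H^p}$-type conditions gives a bounded projection $P_\diin$ of $L^p$ onto $K^p_\diin$ for every $1<p<\infty$. Indeed, $P_\diin f = P_+ f - \diin P_+(\overline{\diin} f)$, which is bounded on $L^p$ because $P_+$ is bounded there. This holds for any inner function; the one-component hypothesis is not needed for this step.

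From the projection it follows that $\{K^p_\diin\}$ is a complemented subscale of $\{L^p\}$. Therefore $(K^{p_0}_\diin,K^{p_1}_\diin)_{\theta,p}=K^p_\diin$ with equivalent norms, and similarly for $H^p$.

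\textbf{Step 3 (transfer of compactness).} The operator $C_\phi$ is bounded on every $K^p_\diin\to H^p$, by the subordination argument in the introduction. Suppose $C_\phi$ is compact for one value $p_0$. For any $p$, choose $p_1$ on the far side of $p$ from $p_0$, so that $p$ lies strictly between $p_0$ and $p_1$. The one-sided compactness theorem for real interpolation (Cwikel, or Cobos--K\"uhn--Schonbek) then gives compactness at $p$.

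The main obstacle is making Step 3 rigorous, because those theorems require the target couple to satisfy suitable approximation conditions. Here both couples are retracts of $L^p$ couples via $P_\diin$ and $P_+$, so Cwikel's theorem applies directly.

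\textbf{Step 4 (conclusion).} Combining the $p$-independence with Step 1 yields the stated criterion for all $p>1$.

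The one-component hypothesis is used only through the $p=2$ characterization, to match the condition in the form stated. In Lyubarskii--Malinnikova, that hypothesis is what allows the Nevanlinna-type condition to be tested with normalized kernels $\knl_w$ via Lemma~\ref{l_LM}, using $|\diin(w)|<a$ on a large connected region. If I argued directly for $p\neq2$ instead, the necessity direction would be the hard part. I would test $C_\phi$ on $p$-normalized kernels $\knl_w^{2/p}$-type functions, which lie in $K^p_\diin$ for one-component $\diin$ by the Aleksandrov embedding, and combine this with Proposition~\ref{p_subharm}.
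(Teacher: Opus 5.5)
Your argument is correct and follows the paper's route. The paper does not reprove the Lyubarskii--Malinnikova result; it recovers it as the case $n=1$ of Theorem~\ref{t_comp_KTp1}, using Theorem~\ref{t_cph_tri} at $p=2$, the retract description of $K^p_\diin$ and $H^p$ via $P_\diin=P-\diin P\overline{\diin}$ and the Riesz projection, and Cwikel's one-sided compactness theorem. As you observe, this needs no one-component hypothesis. Cwikel's theorem for the real method holds for arbitrary Banach couples, so the ``obstacle'' you flag about approximation conditions does not arise.

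The one thing to add is the case where $\diin$ is a finite Blaschke product, which Theorem~\ref{t_cph_tri} excludes. There $C_\phi$ has finite rank, hence is compact. The condition also holds automatically: $\neva_\phi(w)\to 0$ by Littlewood's inequality, while $(1-|\diin(w)|)/(1-|w|)$ stays bounded.
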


In Theorem~\ref{t_comp_KTp1} below, we show that
a direct analog of Proposition~\ref{p_BaLM}
holds for an arbitrary inner function $\diin$.
We use the real interpolation method for Banach spaces,
thus, first we recall the corresponding basic facts.

Let $(A_0, A_1)$ be a compatible pair of Banach spaces.
For parameters $0< \theta < 1$ and  $1\le q \le \infty$, the real method of interpolation
generates $(A_0, A_1)_{\theta, q}$, an interpolation space between $A_0$ and $A_1$
(see, for example, \cite[Chapter~3]{BL76} for details).

We need the following one-sided compactness theorem for the method of real interpolation.

\begin{theorem}[\cite{Cw92}]\label{t_cmpIntrp_Cw}
Let $(A_0, A_1)$ and $(B_0, B_1)$ be compatible pairs of Banach spaces.
Assume that the linear operators $T: A_j \to B_j$, $j=0, 1$, are bounded and
$T:A_0 \to B_0$ is a compact operator.
Then $T: (A_0, A_1)_{\theta, q}  \to (B_0, B_1)_{\theta, q}$ is a compact operator for all admissible
parameters $\theta$ and $q$.
\end{theorem}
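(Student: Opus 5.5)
The plan is to work with the discrete $J$- and $K$-descriptions of the real method and to reduce compactness to controlling \emph{tails} of the discrete decompositions. Write $A_{\theta,q}=(A_0,A_1)_{\theta,q}$ and $B_{\theta,q}=(B_0,B_1)_{\theta,q}$. By the equivalence theorem (see \cite[Chapter~3]{BL76}), every $x$ in the unit ball of $A_{\theta,q}$ admits a representation $x=\sum_{\nu\in\mathbb Z}u_\nu$ with $u_\nu\in A_0\cap A_1$ and
\[
\Bigl\|\bigl(2^{-\nu\theta}J(2^\nu,u_\nu)\bigr)_\nu\Bigr\|_{\ell^q}\le C .
\]
We then split $x=x_-^N+x_{\mathrm{mid}}^N+x_+^N$ according to $\nu<-N$, $|\nu|\le N$ and $\nu>N$. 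The aim is to show two things: the image $T x_{\mathrm{mid}}^N$ ranges over a relatively compact subset of $B_{\theta,q}$, and the images $T x_\pm^N$ lie in a set of small measure of noncompactness once $N$ is large. These two facts together give total boundedness of the image of the unit ball.

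\textbf{The middle part.} For $|\nu|\le N$ the pieces $u_\nu$ are bounded in $A_0\cap A_1$ by a constant depending on $N$. Since $T:A_0\to B_0$ is compact, the set $\{Tx_{\mathrm{mid}}^N\}$ is relatively compact in $B_0$. Since $T:A_1\to B_1$ is bounded, the same set is bounded in $B_1$. We then use the interpolation inequality
\[
\|y\|_{B_{\theta,q}}\le c\,\|y\|_{B_0}^{1-\theta}\|y\|_{B_1}^{\theta}.
\]
It shows that a sequence which is Cauchy in $B_0$ and bounded in $B_1$ is Cauchy in $B_{\theta,q}$. Hence the middle part is relatively compact in $B_{\theta,q}$.

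\textbf{The tails.} The main obstacle is the tails, and precisely here the one-sidedness enters. When $q<\infty$ one tail is harmless by itself, because its $J$-norm is small in $\ell^q$ uniformly? That is false: uniformity over the unit ball fails, so a genuine argument is needed.

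For the tail $\nu\to-\infty$, where the $A_0$-norm governs $J$, I would use compactness of $T$ on $A_0$ once more. On the $B$-side I would pass to the $K$-functional and estimate $K(2^\mu,Tx_-^N)$ by $\min\bigl(\|Tx_-^N\|_{B_0},\,2^\mu\|Tx_-^N\|_{B_1}\bigr)$. The compactness then allows approximating $T|_{A_0}$ up to $\varepsilon$ by a finite-rank operator in $B_0$.

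For the opposite tail $\nu\to+\infty$, where only boundedness $A_1\to B_1$ is available, I would follow Cwikel's device. The first step is to replace $(B_0,B_1)$ by the pair $(\overline{T(A_0)},B_1)$, using compactness to see that $T$ factors through a separable, in fact compactly embedded, subspace. The second step is to apply a duality or reiteration argument, writing $A_{\theta,q}$ as an interpolation space between $A_{\theta_0,q}$ and $A_1$ with $\theta_0<\theta$. The idea is to first establish compactness $A_{\theta_0,q}\to B_{\theta_0,q}$ for small $\theta_0$, where the $A_0$-dominated tail carries most of the weight, and then to bootstrap to general $\theta$.

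If this bootstrap does not close, I would fall back on Cwikel's original route. That route first proves the theorem when the $B$-pair is a weighted $\ell^\infty$/$c_0$-sequence pair. It then transfers the result via the retraction of $B_{\theta,q}$ onto weighted vector-valued sequence spaces. In that sequence-space setting the tail estimates become coordinatewise and compactness can be checked by a diagonal argument.
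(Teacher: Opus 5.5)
The paper gives no proof of this statement (it is quoted from Cwikel), so your proposal has to stand on its own. As written it has a genuine gap, and the gap sits exactly where the difficulty is. The splitting $x=x_-^N+x_{\mathrm{mid}}^N+x_+^N$ is correct, and so is the treatment of the middle part via $\|y\|_{B_{\theta,q}}\le c\|y\|_{B_0}^{1-\theta}\|y\|_{B_1}^{\theta}$. But nothing you write shows that the tails are uniformly small, or even that they have small measure of noncompactness.

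For $\nu<-N$, the bound $K(2^\mu,Tx_-^N)\le\min(\|Tx_-^N\|_{B_0},2^\mu\|Tx_-^N\|_{B_1})$ is useless. The series $x_-^N$ converges in $A_0$, but in general not in $A_1$; think of the unbounded part of an $L^p$ function for the couple $(L^1,L^\infty)$. So you are left with $K(2^\mu,Tx_-^N)\le\|Tx_-^N\|_{B_0}$, which is not $\ell^q$-summable against $2^{-\mu\theta}$ as $\mu\to-\infty$. Norm-approximation of $T|_{A_0}$ by finite-rank operators is neither available in general Banach spaces nor needed.

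For $\nu>N$ you say that only boundedness $A_1\to B_1$ can be used. That is the conceptual error, and it pushes you into the reiteration bootstrap, which is circular. Deducing compactness on $(A_{\theta_0,q},A_1)_{\eta,q}$ from compactness on $A_{\theta_0,q}$ and boundedness on $A_1$ is the theorem itself. Small $\theta_0$ is also no easier, since the upper tail is present for every $\theta_0>0$. The final sequence-space fallback is only named, not carried out.

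What is missing are two uniform estimates, both consequences of compactness of $T:A_0\to B_0$.
\begin{itemize}
\item The function $\omega(t):=\sup\{K(t,Tu): u\in A_0\cap A_1,\ \|u\|_{A_0}\le 1\}$ tends to $0$ as $t\to0$. Take a finite $\varepsilon$-net of $T(\{\|u\|_{A_0}\le 1\})$ in $B_0$, and use $|K(t,y)-K(t,y')|\le\|y-y'\|_{B_0}$ together with $K(t,y)\le t\|y\|_{B_1}$ for $y\in B_0\cap B_1$.
\item An Ehrling-type inequality holds: for every $\varepsilon>0$ there is $\lambda_\varepsilon$ with $\|Tu\|_{B_0}\le\varepsilon\|u\|_{A_0}+\lambda_\varepsilon\|u\|_{A_1}$ for $u\in A_0\cap A_1$. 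If not, there are $u_k$ with $\|u_k\|_{A_0}=1$, $\|u_k\|_{A_1}\to 0$ and $\|Tu_k\|_{B_0}\ge\varepsilon$. A subsequence of $Tu_k$ converges in $B_0$ while tending to $0$ in $B_1$, so its limit is $0$ in $B_0+B_1$, a contradiction.
\end{itemize}

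Apply these to each piece rather than to the whole tail. Write $j_\nu=J(2^\nu,u_\nu)$. Then $K(2^\mu,Tu_\nu)\le j_\nu\min(\delta,C2^{\mu-\nu})$, where $\delta=\varepsilon+\lambda_\varepsilon 2^{-N}$ for $\nu>N$, and $\delta=\omega(2^{-N/2})$ for $\nu<-N$, $\mu\le -N/2$. The kernel $m\mapsto 2^{-m\theta}\min(\delta,C2^m)$ has $\ell^1$-norm $O(\delta^{1-\theta})$. The discrete Young inequality then gives $\|Tx_\pm^N\|_{B_{\theta,q}}\to 0$ uniformly on the unit ball. The remaining range $\mu>-N/2$ of the lower tail is covered by $\|Tx_-^N\|_{B_0}=O(2^{-N\theta})$. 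With these estimates, your own middle-part argument completes a direct proof, with no reiteration or sequence-space machinery.
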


\begin{theorem}\label{t_comp_KTp1}
Let $1<p<\infty$ and $\diin$ be an inner function such that $K_\diin$ is infinite dimensional.
Then $\cph: K^p_\diin \to H^p(\dn)$, $n\ge 1$, is a compact operator if and only if
property \eqref{e_nevaTo0} holds.
\end{theorem}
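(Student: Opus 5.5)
The plan is to reduce everything to the case $p=2$, which is Theorem~\ref{t_cph_tri}, and to move between exponents with the real interpolation result of Theorem~\ref{t_cmpIntrp_Cw}. Two facts are needed. First, for every $0<s<\infty$ the operator $\cph: K^s_\diin\to H^s(\dn)$ is bounded, because $K^s_\diin\subset H^s(\Dbb)$ and $\cph$ maps $H^s(\Dbb)$ boundedly into $H^s(\dn)$, as shown in the Introduction. Second, for $1<p_0<p_1<\infty$ and $1/p = (1-\theta)/p_0+\theta/p_1$, the model spaces interpolate:
\[
(K^{p_0}_\diin, K^{p_1}_\diin)_{\theta,p} = K^p_\diin ,
\]
and likewise $(H^{p_0}(\dn),H^{p_1}(\dn))_{\theta,p}=H^p(\dn)$, in both cases with equivalent norms.

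For the model spaces I would argue as follows. The map $P_\diin = P_+ - \diin P_+\bar\diin$ is built from the Riesz projection $P_+$ and is bounded on $L^s(\Tbb)$ for $1<s<\infty$. On $H^s$ it acts as a projection onto $K^s_\diin$, so $K^s_\diin$ is a complemented subspace of $L^s$, realized by one and the same operator for all $s$. Complemented subspaces given by a common projection interpolate as the ambient spaces do. Since $(L^{p_0},L^{p_1})_{\theta,p}=L^p$, the identity above follows. For $H^p(\dn)$ the same reasoning works with the $n$-fold product of Riesz projections on $L^s(\tn)$.

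\emph{Sufficiency.} Assume \eqref{e_nevaTo0} holds and fix $p\in(1,\infty)$. By Theorem~\ref{t_cph_tri}, $\cph:K^2_\diin\to H^2(\dn)$ is compact. Choose an exponent $p_1$ on the far side of $p$ from $2$, namely $p_1>\max(p,2)$ if $p>2$ and $1<p_1<p$ if $p<2$. Then $p$ lies strictly between $2$ and $p_1$. Apply Theorem~\ref{t_cmpIntrp_Cw} with $A_0=K^2_\diin$, $B_0=H^2(\dn)$, $A_1=K^{p_1}_\diin$, $B_1=H^{p_1}(\dn)$, and with the $\theta$ determined by $p$. This shows that $\cph: K^p_\diin\to H^p(\dn)$ is compact.

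\emph{Necessity.} Assume $\cph:K^p_\diin\to H^p(\dn)$ is compact. Run the same argument with the roles of $p$ and $2$ exchanged: take $A_0=K^p_\diin$ and pick $p_1$ so that $2$ lies strictly between $p$ and $p_1$. This gives compactness on $K^2_\diin$, and Theorem~\ref{t_cph_tri} then yields \eqref{e_nevaTo0}. If $p=2$, there is nothing to prove.

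The main obstacle is the interpolation identity for $K^p_\diin$. One must check three things:
\begin{itemize}
\item $P_\diin$ really projects $L^s$ onto $K^s_\diin$; this uses that $\bar\diin K^s_\diin\subset \overline{H^s_0}$;
\item the two spaces form a compatible couple, which holds inside $L^{1}$;
\item the retract argument transfers the $K$-functional equivalence.
\end{itemize}
These are standard, but they need to be stated carefully; the remaining steps are formal.
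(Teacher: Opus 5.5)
Your proposal is correct and follows the paper's proof: it reduces to the case $p=2$ via Theorem~\ref{t_cph_tri}, identifies $K^p_\diin$ and $H^p(\dn)$ as real interpolation spaces via the projection $P_\diin$ and the (product) Riesz projection, and transfers compactness with Cwikel's one-sided theorem, using boundedness of $\cph$ at the auxiliary exponent $p_1$. Routing both directions explicitly through $p=2$ is just a special case of the paper's argument that compactness does not depend on $p\in(1,\infty)$.
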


\begin{proof}
By Theorem~\ref{t_cph_tri}, it suffices to prove that the compactness of the opera\-tor
$\cph: K^p_\diin(\Dbb)\to H^p(\dn)$, $n\ge 1$, does not depend on the parameter $p\in (1, \infty)$.
To prove this property, assume that $p_0\in (1, \infty)$
and $\cph: K^{p_0}_\diin(\Dbb)\to H^{p_0}(\dn)$, $n\ge 1$, is a compact operator.

Fix $p$ and $p_1$ such that $p_1 > p > p_0$ or $1<p_1 < p < p_0$.
Define $\theta\in (0,1)$ by the following equality:
\begin{equation}\label{e_theta}
\frac{1}{p} = \frac{1-\theta}{p_0} + \frac{\theta}{p_1}.
\end{equation}

On the one hand, equality
\eqref{e_theta} guarantees that
\[
(H^{p_0}(\dn), H^{p_1}(\dn))_{\theta, p} = H^{p}(\dn), \quad n\ge 1.
\]
Indeed, this interpolation formula follows from the corresponding result
for the couple $(L^{p_0}(\tn), L^{p_1}(\tn))$, $1< p_0, p_1 < \infty$, by
applying the Riesz projection.
See\ \cite{Ki99} for further results on interpolation of Hardy spaces.

On the other hand, \eqref{e_theta} gives
\[
(K_\diin^{p_0}, K_\diin^{p_1})_{\theta, p} = K_\diin^{p}.
\]
This interpolation formula is known and
follows from the corresponding result
for the couple $(L^{p_0}(\Tbb), L^{p_1}(\Tbb))$, $1< p_0, p_1 < \infty$, by
applying the projection $P_\diin = P- \diin P \overline{\diin}$,
where $P$ denotes the Riesz projection.

Now, observe that the operator $\cph: K^{p_1}_\diin(\Dbb)\to H^{p_1}(\dn)$, $n\ge 1$, is bounded.
Indeed, as indicated in the introduction, $C_\ph: H^t(\Dbb) \to H^t(\dn)$ is a bounded operator for all $0<t<\infty$.
Thus, applying Theorem~\ref{t_cmpIntrp_Cw} with $q=p$, we conclude that the operator
$\cph: K^{p}_\diin(\Dbb)\to H^{p}(\dn)$, $n\ge 1$, is compact, as required.
\end{proof}

\section{One-component inner functions and Clark measures}\label{s_clk}

\subsection{Spectrum of a one-component inner function}
Given an inner function $\diin$, consider its canonical factorization
\[
\diin(z) = B_\Lambda \exp \left( \int_{\Tbb} \frac{z+\za}{z-\za}\, d\mu(\za)\right ), \quad z\in \Dbb,
\]
where $\Lambda$ is the zero set of $\diin$, $B_\Lambda$ is the corresponding Blaschke product,
 $\mu$ is a positive singular measure. The spectrum $\spr (\diin)$ is defined as
 \[
 \spr(\diin) = (\Tbb \cap \mathrm{clos\,} \Lambda) \cup \mathrm{supp\,}\mu.
 \]

The following lemma provides characterizations of the spectrum for
a one-compo\-nent inner function.

\begin{lemma}[{\cite[Section~5]{VT88}}]\label{l_A}
Let $\diin$ be a one-component inner function and $\al\in\Tbb$. Then the following properties are equivalent.
\begin{enumerate}
  \item[$\bullet$] $\al\in\spr(\diin)$;
  \item[$\bullet$] $\liminf_{w\to\al} |\diin(w)| <1$;
  \item[$\bullet$] $\liminf_{r\to 1-} |\diin(r\al)| <1$.
\end{enumerate}
\end{lemma}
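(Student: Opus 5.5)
The plan is to prove the three implications in a cycle. Two of them are elementary. The implication from the second property to the third, where the radial limit inferior is at most the unrestricted one, is trivial. For the implication from $\al\in\spr(\diin)$ to the second property, I would argue via the canonical factorization. If $\al\in \mathrm{clos\,}\Lambda$, there are zeros $\lambda_k\to\al$ with $\diin(\lambda_k)=0$, so $\liminf_{w\to\al}|\diin(w)|=0$. If instead $\al\in\mathrm{supp\,}\mu$ and $\al\notin\mathrm{clos\,}\Lambda$, I would use the standard fact that $\diin$ cannot extend analytically, or even be bounded below in modulus, near a point of the support of the singular measure. Concretely, if $|\diin(w)|\ge\delta>0$ in $\Dbb\cap U$ for a neighborhood $U$ of $\al$, then $\log|\diin|$ is a bounded harmonic function there with boundary values $0$ a.e.\ on $\Tbb\cap U$. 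Since there are no zeros nearby, this forces $\mu(U\cap\Tbb)=0$, which is a contradiction. This part does not use the one-component hypothesis.

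The substantive implication is from the third property back to $\al\in\spr(\diin)$, and this is where the one-component hypothesis enters. I would prove the contrapositive: assume $\al\notin\spr(\diin)$. Then $\diin$ extends analytically across an arc $J\ni\al$ of $\Tbb$, with $|\diin|=1$ on $J$. The goal is to show $|\diin(r\al)|\to1$.

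First I would fix $r_0$ such that $\Omega=\{|\diin|<r_0\}$ is connected. The hypothesis gives this for some $r$, and I would take that $r$ as $r_0$. Near the arc $J$ (shrinking it to a compact subarc), the extension satisfies $|\diin|\ge r_0'$ in a one-sided neighborhood $V=\{w\in\Dbb: 1-\eta<|w|<1,\ w/|w|\in J\}$ for some $r_0'<1$. The real point, however, is to show that $|\diin(r\al)|\to 1$, and not merely that it stays bounded below. Analyticity across $J$ gives $|\diin(w)|\to1$ as $w\to\al$ directly, by continuity of the extension at $\al$ with $|\diin(\al)|=1$. So the third property fails and the cycle closes.

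Written this way, the one-component hypothesis seems unnecessary, which is suspicious. I expect the main obstacle to be that the intended definition of the spectrum includes only points where $\diin$ fails to extend, while the lemma's content in \cite{VT88} is rather that for one-component $\diin$ the limit inferior along radii detects the spectrum uniformly. The delicate direction is then $\al\in\spr(\diin)\Rightarrow$ the radial condition. For a general inner function, $\al\in\mathrm{supp\,}\mu$ need not make $|\diin(r\al)|$ small along the specific radius; for example, the singular measure may be supported on a Cantor set approaching $\al$ tangentially. For this direction I would use the connectedness of $\{|\diin|<r_0\}$ in the following way. The closure of this level set in $\overline{\Dbb}$ meets $\Tbb$ exactly in $\spr(\diin)$, and its components near the boundary form a single connected region which, by the results of Aleksandrov/Cohn for one-component functions, is a Stolz-type region of bounded hyperbolic width along $\spr(\diin)$. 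One then combines this with Harnack's inequality applied to the positive harmonic function $-\log|\diin|$. This function is comparable along hyperbolically bounded distances, so smallness of $|\diin|$ near $\al$ propagates to the radius $r\al$. Establishing this hyperbolic-width control from connectedness is the step I would expect to require the most care; I would first try to derive it from the estimate $|\diin'(w)|(1-|w|)\asymp 1-|\diin(w)|$ on $\{|\diin|<r_0\}$, which characterizes one-component inner functions.
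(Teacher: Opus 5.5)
The paper does not prove this lemma; it quotes it from \cite{VT88}. Your argument therefore has to stand on its own, and it has a genuine gap exactly where the one-component hypothesis is needed.

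The step you call trivial, from the second property to the third, rests on a reversed inequality. Radial approach is a special case of unrestricted approach, so $\liminf_{w\to\al}|\diin(w)|\le\liminf_{r\to1-}|\diin(r\al)|$. The trivial implication is therefore third $\Rightarrow$ second, not the converse.

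What your arguments actually prove holds for every inner function. The first and second properties are equivalent: your factorization argument even gives $\liminf_{w\to\al}|\diin(w)|=0$ for $\al\in\spr(\diin)$, and analytic continuation across arcs off the spectrum gives the converse. The third property then implies the first.

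The remaining implication, $\al\in\spr(\diin)\Rightarrow\liminf_{r\to1-}|\diin(r\al)|<1$, is the entire content of the lemma. It is false for general inner functions: for the Blaschke product with zeros $(1-4^{-k})e^{i2^{-k}}$ one has $1\in\spr(\diin)$ but $|\diin(t)|\to1$ as $t\to1-$.

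Your last paragraph notices this, but what it offers is not a proof.
\begin{itemize}
\item The claim that the sublevel set is a ``Stolz-type region of bounded hyperbolic width along $\spr(\diin)$'' has no precise source and is inaccurate: for $\diin(z)=\exp(-\frac{1+z}{1-z})$ the sets $\{|\diin|<r\}$ are horodisks.
\item The proposed characterization $|\diin'(w)|(1-|w|)\asymp 1-|\diin(w)|$ on $\{|\diin|<r_0\}$ is false. For the same $\diin$, $|\diin'(t)|(1-t)\to0$ while $|\diin(t)|\to0$ as $t\to1-$.
\item $-\log|\diin|$ is only superharmonic, not harmonic, when $\diin$ has zeros.
\end{itemize}

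The missing step can be done directly from connectedness, with no derivative estimates.
\begin{enumerate}
\item Let $\Omega=\{|\diin|<r\}$ be connected. By your own argument there are $w_k\in\Omega$ with $w_k\to\al$. Fix $w_\ast\in\Omega$ and join $w_k$ to $w_\ast$ by a path $\gamma_k\subset\Omega$.
\item The function $u=-\log|\diin|$ is positive and superharmonic on $\Dbb$, with $u>\log(1/r)$ on $\gamma_k$. The minimum principle gives $u\ge\log(1/r)\,\omega(\cdot,\gamma_k,\Dbb\setminus\gamma_k)$.
\item If $2|w_k-\al|<s<|w_\ast-\al|/2$, then $\gamma_k$ contains a continuum crossing the annulus $\{s/2<|z-\al|<2s\}$. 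An elementary Beurling-type estimate then gives $\omega((1-s)\al,\gamma_k,\Dbb\setminus\gamma_k)\ge c_0$, with an absolute constant $c_0>0$ (for small $s$).
\item Letting $k\to\infty$ gives $|\diin(t\al)|\le r^{c_0}$ for all $t$ close to $1$. This yields the third property, even with $\limsup$ in place of $\liminf$.
\end{enumerate}
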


In this section, we apply Clark measures to describe the compact operators $\cph$.

\subsection{Clark measures}
Given an $\alpha\in\Tbb$ and a holomorphic function $\ph: \dn\to \Dbb$, the quotient
\[
\frac{1-|\ph(z)|^2}{|\al-\ph(z)|^2}= \Rl \left(\frac{\al+ \ph(z)}{\al- \ph(z)} \right), \quad z\in \dn,
\]
is positive and pluriharmonic.
Therefore, there exists a unique positive measure $\clk_\al= \clk_\al[\ph]$ on $\tn$
such that
\[
P[\clk_\al](z) = \Rl \left(\frac{\al+ \ph(z)}{\al- \ph(z)} \right), \quad z\in \dn,
\]
where $P[\clk_\al]$ denotes the Poisson integral of $\clk_\al$, that is,
\[
P[\clk_\al](z) = \int_{\tn} \prod_{j=1}^{n} \frac{1-|z_j|^2}{|1-z_j \overline{\za}_j|^2}\, d\clk_\al(\za), \quad z\in \dn.
\]
By definition, $\clk_\al$ is called a Clark measure.

Let $\clk_\al = \clk_\al^a + \clk_\al^s$ be the Lebesgue decomposition
of the measure $\clk_\al$ with respect to $\meal$.
Observe that
\[
\|\clk_\al\| = P[\clk_\al](0) = \frac{1-|\ph(0)|^2}{|\al - \ph(0)|^2}
\]
and
\[
d\clk_\al^a (\za)= \frac{1-|\ph(\za)|^2}{|\al - \ph(\za)|^2}\, d\meal(\za).
\]
See \cite{AD20} 
for further results about Clark measures in several variables.

\subsection{Compact composition operators and Clark measures}

\begin{theorem}\label{T_2}
Let $\diin$ be a one-component inner function such that $K_\diin$ is infinite dimensional
and let $\ph: \dn\to \Dbb$ be a holomorphic function, $n\ge 1$.
Then $\cph: K_\diin \to H^2(\dn)$ is a compact operator if and only if
\begin{equation}\label{e_clark}
\|\clk_\al^s\| =0 \quad \textrm{for all $\al\in\spr(\diin)$}.
\end{equation}
\end{theorem}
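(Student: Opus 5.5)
I will use the test-function approach of Cima--Matheson and Lyubarskii--Malinnikova, adapted to the slice structure of $\dn$. The plan is to link both sides to the behaviour of $\cph$ on the normalized reproducing kernels $\kknl_w = \knl_w/\|\knl_w\|$, with $w$ approaching points of $\spr(\diin)$. Two ingredients do most of the work.

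First, a direct computation with the Poisson kernel. For $w = r\al$ and $r\to 1-$,
\[
\Bigl\|\frac{1-r}{1-\bar w\ph}\Bigr\|^2_{H^2(\dn)}\cdot\frac{1}{1-r}\ \asymp\ \int_{\tn}\frac{1-r}{|1-r\bar\al\ph(\za)|^2}\,d\meal(\za),
\]
and by the Clark theory on $\dn$ (boundary values of $P[\clk_\al]$, dominated convergence on the absolutely continuous part) this quantity converges to a constant multiple of $\|\clk^s_\al\|$. For $n=1$ this is the Cima--Matheson computation. On $\dn$ the same argument goes through, because $(1-r^2)/|1-r\bar\al \ph|^2$ is the Poisson-type quantity $\Rl\bigl((\al+r\ph)/(\al-r\ph)\bigr)\cdot(1-r)$ up to bounded factors. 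Its integral at the boundary tends to $\clk_\al^s(\tn)$, since the total mass $\|\clk_{\al}\|$ is recovered from $\ph(0)$.

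Second, the kernel structure. We have $\kknl_w\circ\ph = (1-\overline{\diin(w)}\,\diin\circ\ph)/(1-\bar w\ph)\cdot \|\knl_w\|^{-1}$. When $|\diin(w)|\le a<1$, the numerator is bounded above and below on the relevant set, so $\|\cph\kknl_w\|$ is comparable to the quantity in the first ingredient.

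\emph{Necessity.} Suppose $\cph$ is compact and $\al\in\spr(\diin)$. By Lemma~\ref{l_A} there are $r_q\to1$ with $|\diin(r_q\al)|<a<1$. By Lemma~\ref{l_LM}, $\kknl_{r_q\al}\to0$ weakly, so $\|\cph\kknl_{r_q\al}\|\to0$. Using $\|\knl_w\|^2\asymp 1/(1-|w|)$ and the lower bound $|1-\overline{\diin(w)}\diin\circ\ph|\ge 1-a$, we get $\int_{\tn}(1-r_q)/|1-r_q\bar\al\ph|^2\,d\meal\to0$. The first ingredient then gives $\clk_\al^s=0$.

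\emph{Sufficiency.} I would use Theorem~\ref{t_cph_tri} and show \eqref{e_nevaTo0} by contradiction. Suppose $w_q\to\al\in\Tbb$ with
\[
\int_{\tn}\neva_{\ph_\za}(w_q)\,d\meal(\za)\cdot\frac{1-|\diin(w_q)|}{1-|w_q|}\ge\varepsilon .
\]
This forces $|\diin(w_q)|\le 1-\delta$, so $\al\in\spr(\diin)$ by Lemma~\ref{l_A} (the $\liminf_{w\to\al}$ form). Next, Corollary~\ref{c_subharm} on a disk $\Delta_q$ of radius $c(1-|w_q|)$, combined with Corollary~\ref{c_Stanton} applied to $f=\kknl_{w_q}$, should give
\[
\int\neva(w_q)\,\frac{1-|\diin(w_q)|}{1-|w_q|}\lesssim \|\cph\kknl_{w_q}\|^2 .
\]
For this to hold, $|\kknl_{w_q}'|^2\gtrsim (1-|\diin(w_q)|)/(1-|w_q|)^3$ must hold on $\Delta_q$, which follows from the one-component Bernstein-type estimates of Aleksandrov (as in \cite{LM13}) once $c$ is small. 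Finally, I would bound $\|\cph\kknl_{w_q}\|^2$ by the Poisson quantity at $w_q$, now with $w_q$ tending to $\al$ possibly nontangentially or tangentially. The difficulty here is to replace $w_q$ by the radial point $|w_q|\al$. I would handle it by splitting $\tn$ into the region where $\ph$ is near $\al$, which is controlled by $\clk^s_\al=0$, and the rest, which tends to $0$. A tangential approach has to be handled by a compactness argument over $\al'$ near $\al$ that also lie in $\spr(\diin)$, again using the one-component property. This gives the contradiction.

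The main obstacle is this last step: uniform control of $\|\cph\kknl_{w}\|$ as $w$ tends to $\spr(\diin)$ along arbitrary directions, using only $\clk^s_{\al}=0$ for $\al\in\spr(\diin)$. I expect to need the fact that $\spr(\diin)$ is closed, together with the weak-star continuity $\al\mapsto\clk_\al$, to turn the pointwise hypothesis into the required uniform decay.
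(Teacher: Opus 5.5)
You have the necessity direction right, and it is the paper's argument: radial points from Lemma~\ref{l_A}, weak convergence from Lemma~\ref{l_LM}, the lower bound $1-a$ on the numerator, and the Cima--Matheson computation.

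The gap is in sufficiency, at the step you yourself flag as the main obstacle. You need
\[
\limsup_{q\to\infty}\int_{\tn}\frac{1-|w_q|^2}{|1-\overline{w_q}\ph(\za)|^2}\,d\meal(\za)=0
\]
for an arbitrary sequence $w_q\to\al$, possibly tangential. Since $|\diin(w_q)|\le 1-\delta$, this quantity dominates $\|\cph\knl_{w_q}\|^2/\|\knl_{w_q}\|^2$ up to a constant. You leave this open, and the repair you sketch would not work as described.
\begin{itemize}
\item There is no general way to pass from $w_q$ to $|w_q|\al$ under tangential approach.
\item Points $\al'$ near $\al$ need not lie in $\spr(\diin)$. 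For example, $\exp\frac{z+1}{z-1}$ is one-component with spectrum $\{1\}$.
\item Weak-star continuity of $\al'\mapsto\clk_{\al'}$ gives no control of the singular masses $\|\clk^s_{\al'}\|$.
\end{itemize}

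The missing idea is that no radial reduction or uniformity is needed. For every $w\in\Dbb$,
\[
\frac{1-|w|^2}{|1-\overline{w}\ph|^2}=\Rl\frac{1+\overline{w}\ph}{1-\overline{w}\ph}-|w|^2\,\frac{1-|\ph|^2}{|1-\overline{w}\ph|^2}.
\]
The first term is a bounded pluriharmonic function on $\dn$. Its integral over $\tn$ therefore equals $\frac{1-|w\ph(0)|^2}{|1-\overline{w}\ph(0)|^2}$. This tends to $\|\clk_\al\|$ as $w\to\al$ in any manner, because $\ph(0)$ is a fixed interior point. For the second term, Fatou's lemma gives
\[
\liminf\ \ge\ \int_{\tn}\frac{1-|\ph|^2}{|1-\overline{\al}\ph|^2}\,d\meal=\|\clk^a_\al\|,
\]
again for any approach. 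Hence the $\limsup$ is at most $\|\clk^s_\al\|=0$, tangential approach included, and this gives the contradiction. This is exactly how the paper concludes.

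Two smaller points. First, your assertion that the hypothesis ``forces'' $|\diin(w_q)|\le 1-\delta$ needs Littlewood's inequality $\neva_{\ph_\za}(w)\le C(1-|w|)$ near the boundary. Second, in the lower bound via Corollary~\ref{c_Stanton} and Corollary~\ref{c_subharm}, you can use the Szegő kernel $1/(1-z\overline{w})$ of $H^2(\Dbb)$ in place of $\knl_w$, as the paper does. That makes the derivative estimate trivial. If you keep $\knl_w$, Schwarz--Pick already suffices, and Aleksandrov's Bernstein-type estimates are not needed.
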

\begin{proof}
Assume that $\cph$ is a compact operator.
Consider a point $\al\in\spr(\diin)$.
By Lemma~\ref{l_A}, there exists a sequence $\{r_q\}$ such that $r_q\nearrow 1-$ and
\begin{equation}\label{e_less1}
  \lim_{q\to \infty} |\diin(r_q\al)| <1.
\end{equation}

Lemma~\ref{l_LM} guarantees that
\[
\int_{\tn}\frac{|1-\diin(\ph(\za))\overline{\diin}(r_q\al)|^2}{|1- r_q\ph(\za)\overline{\al}|^2} \frac{1-|r_q|^2}{1-|\diin(r_q\al)|^2}\,d\meal(\za)
= \frac{\|\cph \knl_{r_q\al}\|^2_{2}}{\|\knl_{r_q\al}\|^2_2} \to 0\quad \textrm{as}\ q\to\infty.
\]
By \eqref{e_less1}, we obtain
\begin{equation}\label{e_null}
  \int_{\tn}\frac{1-|r_q|^2}{|1- r_q\ph(\za)\overline{\al}|^2}
\to 0\quad \textrm{as}\ q\to\infty.
\end{equation}
Now, we argue as in \cite{CM97} for $n=1$.
One has
\[
\frac{1-|r_q|^2}{|1- r_q\ph(\za)\overline{\al}|^2}
= \frac{1-|r_q \ph(\za)|^2}{|1- r_q\ph(\za)\overline{\al}|^2}
-  |r_q|^2 \frac{1-|\ph(\za)|^2}{|1- r_q\ph(\za)\overline{\al}|^2}.
\]
Observe that the pluriharmonic function
\[
\frac{1-|r_q \ph(\za)|^2}{|1- r_q\ph(\za)\overline{\al}|^2}
\]
is bounded on the polydisk $\dn$,
thus,
\[
\int_{\tn}\frac{1-|r_q \ph(\za)|^2}{|1- r_q\ph(\za)\overline{\al}|^2}\, d\meal(\za) =
\frac{1-|r_q \ph(0)|^2}{|1- r_q\ph(0)\overline{\al}|^2}\overset{q\to \infty}\longrightarrow  \|\clk_\al\|.
\]
On the other hand, by the monotone convergence theorem,
\[
\int_{\tn}|r_q|^2 \frac{1-|\ph(\za)|^2}{|1- \ph(\za) r_q\overline{\al}|^2}\, d\meal(\za) \overset{q\to \infty}\longrightarrow
\int_{\tn}\frac{1-|\ph(\za)|^2}{|1- \ph(\za)\overline{\al}|^2}\, d\meal(\za) = \|\clk_\al^a\|.
\]
Applying \eqref{e_null}, we obtain
\[
\|\clk_\al^s\| = \|\clk_\al\| - \|\clk_\al^a\| =0,
\]
as required.

To prove the reverse implication, assume that condition \eqref{e_clark} is satisfied.
By Theo\-rem~\ref{t_cph_tri}, it suffices to verify property \eqref{e_nevaTo0}.
Suppose that \eqref{e_nevaTo0} does not hold.
Then there exist a sequence $\{w_q\}_{q=1}^\infty$ and a point $\al\in\Tbb$ such that
$w_q\to \al$ as $q\to \infty$ and
\[
\int_{\tn}\neva_{\ph_\za}(w_q) \frac{1-|\diin(w_q)|}{1-|w_q|}\, d\meal(\za) > c_0 > 0.
\]
By Littlewood's inequality,
\[
\neva_{\ph_\za}(w) \le C (1-|w|), \quad\za\in\tn,\ \frac{1+|\ph(0)|}{2} < |w| <1.
\]
Therefore, property \eqref{e_LM} holds. Also, $\al\in\spr(\diin)$ by Lemma~\ref{l_A} and
\begin{equation}\label{e_NgC}
\int_{\tn} \frac{\neva_{\ph_\za}(w_q)}{1-|w_q|}\, d\meal(\za) > c > 0
\end{equation}
for all sufficiently large $q$.

Now, we consider the bounded operator $\cph: H^2(\Dbb) \to H^2(\dn)$.
Recall that the reproducing kernel for $H^2(\Dbb)$ is defined by
\[
\kknl_w (z) = \frac{1}{1- z\overline{w}}, \quad \|\kknl_w\|_2^2 = \frac{1}{1-|w|^2}.
\]
Let $D_\er(w) = \{z\in\Dbb: |z-w| < \er|1- z\overline{w}|\}$, that is,
let $D_\er(w)$ denote the pseudohyperbolic $\er$-disk centered at  $w\in \Dbb$.
Observe that
\begin{equation}\label{e_kknl}
|\kknl_w^\prime (z)| \ge C_\er \frac{|w_q|}{(1-|w_q|)^2}, \quad z\in D_\er(w).
\end{equation}
Sequentially applying equality \eqref{e_Stntn_d}, estimate~\eqref{e_kknl} and Corollary~\ref{c_subharm}, we obtain
\begin{equation}\label{e_rker_estim}
\begin{aligned}
\frac{\|C_\ph \kknl_{w_q}(z)\|^2}{\|\kknl_{w_q}\|^2}
    &\ge \frac{1}{\|\kknl_{w_q}\|^2} \int_{\Dbb} |\kknl^\prime_{w_q}(z)|^2
\left(\int_{\tn} \neva_{\ph_\za}(z)\,  d\meal(\za) \right)\, d\aream(z)\\
    &\ge \int_{D_\er(w_q)} \frac{C}{(1- |w_q|^2)^{3}}
\left(\int_{\tn} \neva_{\ph_\za}(z)\,  d\meal(\za) \right)\, d\aream(z)\\
    &\ge \frac{C_\er}{1- |w_q|^2} \int_{\tn} \neva_{\ph_\za}(w_q)\,d\meal(\za).
\end{aligned}
\end{equation}

On the one hand, using \eqref{e_rker_estim} and \eqref{e_NgC}, we conclude that
\[
\frac{\|\cph \kknl_{w_q}\|_2^2}{\|\kknl_{w_q}\|_2^2}
\ge C \int_{\tn} \frac{\neva_{\ph_\za}(w_q)}{1-|w_q|}\, d\meal(\za) > C\cdot c > 0.
\]
On the other hand, applying Fatou's lemma and condition \eqref{e_clark}, we obtain
\[
\begin{split}
   &\limsup_{q\to \infty} \frac{\|\cph \kknl_{w_q}\|_2^2}{\|\kknl_{w_q}\|_2^2}
   \\
&\le \limsup_{q\to \infty} \int_{\tn} \frac{1-|w_q\ph(\za)|^2}{|1- \overline{w_q}\ph(\za)|^2} \, d\meal(\za)
- \liminf_{q\to \infty} \int_{\tn} |w_q|^2 \frac{1-|\ph(\za)|^2}{|1- \overline{w_q}\ph(\za) |^2}\, d\meal(\za) \\
&\le \frac{1-|\ph(0)|^2}{|1- \overline{\al}\ph(0)|^2}
- \int_{\tn} \frac{1-|\ph(\za)|^2}{|1- \overline{\al}\ph(\za) |^2}\, d\meal(\za) \\
     &= \|\clk_\al\| - \|\clk_\al^a\| = \|\clk_\al^s\| =0,
\end{split}
\]
a contradiction. Therefore, the required property \eqref{e_nevaTo0} holds.
\end{proof}

\bibliographystyle{amsplain}

\end{document}